%
%
%
%
\documentclass{amsart}

\usepackage[active]{srcltx}  
\usepackage{amsmath}
\usepackage{amsfonts,amssymb,amsthm,amscd,latexsym,euscript}

\usepackage[all]{xy}

\newcommand{\Sp}{\ensuremath{\textup{Sp}}}

\newcommand{\Grp}{\ensuremath{\textup{Grp}}}
\newcommand{\Set}{\ensuremath{\textup{Set}}}
\newcommand{\op}{{\ensuremath{\textup{op}}}}

%
%


%
%


\newcommand {\cofib} {\ensuremath{\hookrightarrow}}

\newcommand {\trivcofib} {\ensuremath{\tilde\hookrightarrow}}

\newcommand {\trivfibr} {\ensuremath{\tilde\twoheadrightarrow}}

\newcommand{\cal}[1]{\ensuremath{\mathcal #1}}

%
%

\newtheorem {theorem1}{Theorem}[section]
\newtheorem {theorem}[theorem1]{Theorem}

\newtheorem {proposition}[theorem1]{Proposition}
\newtheorem {lemma}[theorem1]{Lemma}
\newtheorem {condition}[theorem1]{Condition}
\theoremstyle{definition}
\newtheorem {definition}[theorem1]{Definition}

\theoremstyle{remark}
\newtheorem {remark}[theorem1]{Remark}

%
%

\newcommand{\calT}{\ensuremath{\mathcal{T}} }

%
%

\newcommand{\cat}[1]{\ensuremath{\EuScript #1}}

\newcommand{\sS}{\ensuremath{\mathcal{S}} }

\newcommand{\Top}{\ensuremath{\calT\textup{op}}}

%
%


\newcommand{\colim}{\ensuremath{\mathop{\textup{colim}}}}
\newcommand{\hocolim}{\ensuremath{\mathop{\textup{hocolim}}}}

\DeclareMathOperator{\Lan}{\ensuremath{\textup{Lan}}}
\DeclareMathOperator{\Ran}{\ensuremath{\textup{Ran}}}

%
%

\newcommand{\Id}{\ensuremath{\textup{Id}}}

\renewcommand{\hom}{\ensuremath{{\rm hom}}}


%

%



\newcounter{zahl}%
    {\end{list}}%


\begin{document}

\SelectTips{cm}{10}

\title{Representability theorems, up to homotopy}
\author{David Blanc}\thanks{The first author acknowledges the support of ISF 770/16 grant.}
\author{Boris Chorny}\thanks{The second author acknowledges the support of ISF 1138/16 grant.}

\address{Department of Mathematics, University of Haifa, Haifa, Israel}
\email{blanc@math.haifa.ac.il}

\address{Department of Mathematics, University of Haifa at Oranim, Tivon, Israel}
\email{chorny@math.haifa.ac.il}

\subjclass{Primary 55U35; Secondary 55P91, 18G55}

\keywords{Mapping spaces,representable functors,Bousfield localization,
  non-cofibrantly generated, model category}

\date{\today}
\dedicatory{}
\commby{}

\begin{abstract}
  We prove two representability theorems, up to homotopy, for presheaves taking
  values in a closed symmetric combinatorial model category \cat V.
  The first theorem resembles the Freyd representability theorem,
  the second theorem is closer to the Brown representability theorem.
  As an application we discuss a recognition principle for mapping spaces.
\end{abstract}

\maketitle

\section{Introduction}
It is a classical question in homotopy theory whether for a given space $X\in \Top_*$
there exists a space $Y\in \Top_*$ such that $X\simeq \hom(S^1, Y)$.
Several solutions to this question have emerged, beginning with Sugawara's work
in \cite{Sugawara}. The approach proposed by Stasheff in \cite{Stasheff},
Boardman and Vogt in \cite{Boardman-Vogt}, and May in \cite{May} are known nowadays as
operadic, while Segal's loop space machine (see \cite{Segal}), is closer to Lawvere's
notion of an algebraic theory. Later on, the canonical delooping machine by Badzioch, Chung,
and Voronov (see \cite{canonical}) provided a simplicial algebraic theory
$\cat T_{n}, \,  n\geq 0$ allowing for the recognition of an $n$-fold loop space
as a homotopy algebra over $\cat T_{n}$.

It is natural to ask whether there exist similar recognition principles for mapping
spaces of the form $X = \hom(A,Y)$ for $A\in \Top_*$ other than $S^n$? When $A = S^2\vee S^3$,
for example, there is no simplicial algebraic theory \cat T such that all spaces of the
form $X=\hom (S^2\vee S^3, Y)= \Omega^2 Y\times \Omega^3 Y$ are homotopy algebras
over \cat T (see \cite[p.~2]{canonical}).  More generally, if a space $A$ has rational
homology in more than one positive dimension, there is no simplicial algebraic
theory \cat T such that all the spaces of the form $X=\hom(A,Y)$ have
the structure of homotopy algebras over \cat T, \cite{Badz-Dobr-note}.

In this paper we suggest using a larger category than an algebraic theory
for the recognition of arbitrary mapping spaces, up to homotopy. This category
will be closed under arbitrary homotopy colimits,  unlike an algebraic theory
which is closed only under finite (co)products. The minimal subcategory of spaces
containing $A$ and closed under the homotopy colimits is denoted $C(A)$.
This approach was first introduced by Badzioch, Dorabia{\l}a, and the first author
in \cite{Bad-Blanc-Dor}, where an attempt to limit the homotopy colimits involved was made.
We take a different approach here, and consider the functors defined on the
large subcategory of spaces $C(A)$. Given a space $X\in \Top_*$, suppose
there exists a functor $F\colon C(A)\to \Top_*$ taking homotopy colimits to
homotopy limits and satisfying $F(A)\simeq X$. The question of whether there
exists a space $Y$ such that $X\simeq \hom(A, Y)$ for some $Y$ is equivalent
to the question of representability of the functor $F$, up to homotopy.

Theorems about representability of functors are naturally divided
into two main types: Freyd and Brown representability theorems. In both cases
some exactness condition for the functor under consideration is necessary.
Theorems of Freyd type use a set-theoretical assumption about the functor:
for example, the solution set condition, or accessibility (see \cite[3, Ex.~G,J]{Freyd-book},
\cite[4.84]{Kelly}, and \cite[1.3]{Neeman-Rosicky}). Theorems of Brown type use
set-theoretical assumptions about the domain category, such as the existence
of sufficiently many compact objects (see \cite{Brown}, \cite{Neeman-book},
\cite{Krause}, and \cite{Franke}).

In this paper we address the question of representability (up to homotopy) of functors
taking values in a closed symmetric combinatorial model category. After
some technical preliminaries in Section~\ref{prelim}, we prove the Freyd version of
representability up to homotopy in Section~\ref{Freyd}.
The solution set condition is replaced by the requirement that the functor be small.
Theorem~\ref{Freyd-rep} generalizes \cite[Theorem 4.84]{Kelly} to functors defined
on a $\cat V$-model category, rather than just a $\cat V$-category. At the same time, it
generalizes a result the first author on the representability of small contravariant
functors from spaces to spaces (see \cite{Chorny-BrownRep}).

The Brown version of representability up to homotopy is proved in Section~\ref{Brown}.
The set-theoretical condition concerning the domain category is local presentability.
In other words, we show that for any \cat V-presheaf  $H$ defined on a combinatorial
\cat V-model category \cat M and taking homotopy colimits to homotopy limits,
there is a fibrant object $Y\in \cat M$ and a natural transformation
$h\colon H(-)\to \hom(-,Y)$, which is a weak equivalence for every cofibrant
$X\in \cat M$. A similar theorem for functors taking values in simplicial sets was
proved by Jardine in \cite{Jardine-representability}. However, the conditions
required for the Brown representability, up to homotopy, to hold are formulated
for the homotopy category of the model category and do not allow for an easy
verification in an arbitrary combinatorial model category.

In Section~\ref{counter-example} we provide an example of a non-small presheaf,
defined on a non-combinatorial model category, which is not representable up to homotopy,
This shows that representability theorems are not tautological.

In Section~ref{recognition} we interpret Brown representability up to homotopy
as a recognition principle for mapping spaces for an arbitrary space $A$
(rather than just $S^n$).

\section{(Model) Categorical preliminaries}\label{prelim}

For every closed symmetric monoidal combinatorial model category
\cat V and $\cat V$-model category \cat M (not necessarily
combinatorial), we can consider the category of small presheaves
$\cat V^{\cat M^{\op}}$. This is a \cat V-category of functors,
which are left Kan extensions from small subcategory of \cat M.
The category $\cat V^{\cat M^{\op}}$ is cocomplete by
\cite[5.34]{Kelly}. Since \cat V is a combinatorial model
category, it is in particular locally presentable. Therefore, the
category of small presheaves  $\cat V^{\cat M^{\op}}$ is also
complete by \cite{Day-Lack}. For a \cat V-category \cat C we denote by $\cat C_0$ the
underlying category of $\cat C$ (enriched only in $\Set$).

\begin{definition}
A natural transformation $f\colon F \to G$ in  $\left(\cat V^{\cat M^{\op}}\right)_0$ is called a
\emph{cofibrant-projective weak equivalence} (respectively, a
\emph{cofibrant-projective fibration}) if for all cofibrant $M\in\cat M$,
the induced map $f_M\colon F(M)\to G(M)$ is a weak equivalence (respectively, a fibration).
The notion of a \emph{projective weak equivalence} (respectively, a
\emph{projective fibration}) in $\left(\cat V^{\cat M^{\op}}\right)_0$ is a particular case of the
cofibrant-projective analog, when all objects of \cat M are cofibrant -- e.g., for
the trivial model structure on \cat M. If (cofibrant-)projective fibrations and
weak equivalences give rise to a model structure on the category of small presheaves,
this model structure is called \emph{(cofibrant-)projective}.
\end{definition}

First of all, we would like to establish the existence of the cofibrant-projective
model structure on $\left(\cat V^{\cat M^{\op}}\right)_0$. For a simplicial model category \cat M,
with $\cat V = \sS$ (the category of simplicial sets), this was proven in
\cite[2.8]{Chorny-relative}. For a combinatorial model category $\cat M^{op}$, this
was proven in \cite[3.6]{Duality}.  But the case of contravariant small functors from
a combinatorial model category to \cat V is not covered by the previous results.

\begin{condition}\label{effective}
  Every trivial fibration in \cat V is an effective epimorphism in $\cat V_0$
  (cf., \cite[II, p.~4.1]{Quillen}).
\end{condition}

Basic examples of categories satisfying this condition are (pointed)
simplicial sets, spectra, and chain complexes.

\begin{theorem}\label{tcpms}
  Let \cat V be a closed symmetric monoidal model category satisfying condition
  \ref{effective}, and \cat M a \cat V-model category. The category of small functors
  $\left(\cat V^{\cat M^{\op}}\right)_0$ may be equipped with the cofibrant-projective model structure. Moreover, $\cat V^{\cat M^{\op}}$ becomes a \cat V-model category.
\end{theorem}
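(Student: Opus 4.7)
The plan is to adapt the class-cofibrantly-generated small-object argument of \cite{Chorny-relative} and \cite{Duality} to the present mixed setting, in which $\cat M$ is $\cat V$-enriched with a model structure but is not assumed combinatorial. The generating cofibrations will form the proper class
$I = \{M \otimes i \mid M \in \cat M \text{ cofibrant}, \, i \in I_{\cat V}\}$,
with $J$ defined analogously from the generating trivial cofibrations $J_{\cat V}$, where $\otimes$ denotes the $\cat V$-tensoring into the functor category. Completeness and cocompleteness are already at hand; two-out-of-three and closure under retracts for the cofibrant-projective weak equivalences and fibrations are immediate because these classes are defined pointwise on the cofibrant objects of $\cat M$.

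The central difficulty is producing functorial factorizations while $I$ and $J$ are only proper classes. I would exploit smallness: every $F \in \cat V^{\cat M^{\op}}$ is a left Kan extension along an inclusion $\cat C_F \hookrightarrow \cat M$ of a small $\cat V$-subcategory. Given $f\colon F \to G$, at each stage of the transfinite construction only a set of elements of $I$ (respectively $J$) --- those indexed by cofibrant objects inside a controlled small enlargement of $\cat C_F \cup \cat C_G$ --- contributes a nontrivial pushout. The small-object argument can therefore be executed stage by stage as a coproduct over a set, and since small presheaves are closed under the pushouts and $\lambda$-filtered colimits appearing in this controlled construction, the output stays inside $\cat V^{\cat M^{\op}}$. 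This controlled small-object argument is the main obstacle: tracking smallness through the transfinite stages while still testing against every relevant generator is where the real technical work lies.

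Condition~\ref{effective} enters when identifying the right lifting classes, ensuring that having the right lifting property against every $M \otimes i$ with $M$ cofibrant and $i \in I_{\cat V}$ is equivalent to the pointwise cofibrant-projective trivial fibration condition (and analogously with $J_{\cat V}$ for the projective fibrations): effective epimorphy of trivial fibrations in $\cat V_0$ is precisely what allows set-level commutative squares to be detected inside the $\cat V$-enriched hom, so that enriched liftings yield the ordinary liftings demanded by the definition. Finally, the $\cat V$-model structure axiom follows formally from the pushout-product property of $\cat V$ itself, since tensoring with a fixed cofibrant $M \in \cat M$ commutes with pushout-products along $\cat V$-maps, so $I$ is closed under pushout-product with $I_{\cat V}$, and $I$ with $J_{\cat V}$ lands in $J$, up to isomorphism.
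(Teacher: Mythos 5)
You have chosen the right generating classes and correctly located the central difficulty---$\cat I$ and $\cat J$ are proper classes, so the factorizations must come from a generalized, class-indexed small object argument---but the two places where the real work happens are both off. The mechanism you describe, namely that at each stage ``only a set of elements of $I$, those indexed by cofibrant objects inside a controlled small enlargement of $\cat C_F\cup\cat C_G$, contributes a nontrivial pushout,'' is not correct as stated: a map $R_M\otimes A\to F$ can be nontrivial for cofibrant $M$ lying far outside any small subcategory from which $F$ is a left Kan extension. What one actually verifies is \emph{local smallness}: every morphism of maps from a generator to $f$ \emph{factors through} a member of a set of maps. The paper obtains this by reducing, via adjunction, to maps $R_M\to X^A\times_{Y^A}Y^B$, writing this small functor as a coequalizer of representables via its coend presentation, and invoking the weak enriched Yoneda lemma, so that every such map factors through some $R_D\otimes FD$ with $D$ ranging over a small subcategory $\cat D$.

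The genuine gap is what comes next. The objects $R_D\otimes FD$ are \emph{not} $\cat I$-cofibrant, because $D\in\cat D$ need not be cofibrant in $\cat M$, whereas your generators are indexed only by cofibrant objects. One must pass to a cofibrant replacement $\tilde D\trivfibr D$ and show that every map $R_M\to R_D\otimes FD$ (for $M$ cofibrant) still factors through the replacement; this is precisely where Condition~\ref{effective} is spent: $\hom(M,\tilde D)\to\hom(M,D)$ is a trivial fibration in $\cat V$, hence an effective epimorphism, and tensoring effective epimorphisms with objects and composing them yields the required surjectivity on maps out of $R_M$. Your proposal instead uses Condition~\ref{effective} to identify the right lifting classes, but that identification is a formal consequence of the natural isomorphism $\hom(R_M\otimes A,X)\cong\hom(A,X(M))$ and needs no hypothesis on $\cat V$ at all. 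Without the cofibrant-replacement step your controlled small object argument does not close up: the factorizations you build would pass through non-cofibrant cells, and the resulting ``cofibrations'' would not lie in $\cat I\text{-cof}$. (Your final remark on the $\cat V$-model structure via the pushout-product property is fine; the paper simply cites a result from \cite{Duality} for this.)
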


\begin{proof}
  Let $I$ and $J$ be the classes of generating cofibrations and generating trivial
  cofibrations in \cat V, and let $\cat M_{\mathrm{cof}}$ denote the subcategory of
  cofibrant objects of \cat M.

  The cofibrant-projective model structure on the category of small presheaves
  $\cat V^{\cat M^{\op}}$  is generated by the following classes of maps.

\[
\cat I = \left\{\left. R_M\otimes i \right|
I\ni i\colon A\cofib B,\, M\in \cat M_{\mathrm{cof}}\right\}\\
\]
and
\[
\cat J = \left\{\left. R_M\otimes j \right|
J\ni j\colon U\trivcofib V,\, M\in \cat M_{\mathrm{cof}}\right\}\\
\]
where $R_M$ is the representable functor $X\mapsto\hom(X,M)\in\cat V$.

It suffices to verify that these two classes of maps admit the generalized
small object argument,\cite{pro-spaces}. More specifically, we need to show that
\cat I and \cat J are locally small. In other words, for any map $f\colon X\to Y$
we need to find a \emph{set} $\cal W$ of maps in \cat I-cof (respectively, \cat J-cof),
such that every morphism of maps $R_M\otimes i \to f$ factors through an
element in $\cal W$. By adjunction, it is sufficient to find a set of cofibrant
objects $\cal U$, such that every map $R_M\to X^A\times_{Y^A} Y^B$ factors
though an element of $\cal U$.

Consider the functor $F=X^A\times_{Y^A} Y^B$. Like any small functor,
$F\colon \cat M^{\op}\to \cat V$ is a left Kan extension from a small full
subcategory $\cat D$ of \cat M, hence a weighted colimit of representable
functors, which, in turn, may be viewed as a coequalizer, by the dual of \cite[3.68]{Kelly}:
\[
F = \int^{\cat D} R_D\otimes FD = \mathrm{coeq}\left( \coprod_{f\colon D'\to D} R_{D'}\otimes FD \rightrightarrows \coprod_D R_D\otimes FD\right).
\]

Therefore, every \cat V-natural transformation $R_A\to F$ in
$\left(\cat V^{\cat M^{\op}}\right)_0$ factors through
$R_{D}\otimes FD$ for some $D\in \cat D$ by the weak Yoneda lemma
for \cat V-categories.  Unfortunately, $R_{D}\otimes FD$ is not
necessarily  \cat I-cofibrant. However, we can find an $\cal
I$-cofibrant object $U$ having a factorization $R_A\to U\to
R_{D}\otimes FD$ for every cofibrant $A\in \cat M$.

Let $q\colon\tilde D \trivfibr D$ be a cofibrant replacement in \cat M, and
$U:=R_{\tilde D}\otimes \tilde D$, with the map $U\to R_{D}\otimes FD$ composed of
$R_D \otimes q$ and $\hom(-,q) \otimes \tilde D$. It suffices to show that the
induced map $\hom(R_A, U)\to \hom(R_A,R_{D}\otimes FD)$ is an epimorphism. This map factors,
in turn, as a composition of two maps
$\hom(A,\tilde D)\otimes \tilde D\to \hom(A, D)\otimes \tilde D\to \hom(A, D)\otimes D$,
each of which is given by tensoring an effective epimorphism with an object of \cat V.
A map is an effective epimorphism if and only if it is the coequalizer of some
pair of parallel maps (see, e.g., the dual of \cite[10.9.4]{Hirschhorn}). Hence,
this is a composition of two effective epimorphisms, and so an epimorphism.
In particular, if $S$ is a unit of \cat V, then
$\hom_{\cat V_0}(S,\hom(R_A, U))\to \hom_{\cat V_0}(S, \hom(R_A,R_{D}\otimes FD))$
is a surjection of sets.

$\cat V^{\cat M^{\op}}$ becomes a \cat V-model category by \cite[Prop.~3.18]{Duality}.
\end{proof}

\begin{definition}\label{F-def}
Consider the following classes of maps in $\cat V^{\cat M^{op}}$:
\[
\cal F_0 = \left\{
\emptyset = \hocolim_\emptyset \emptyset \to R_{\hocolim_\emptyset \emptyset}=R_\emptyset
\right\}
\]
\[
\cal F_1 = \left\{
R_X\otimes A \to R_{X\otimes A} | \, X\in \cat M,\, A\in \cat V\text{ -- cofibrant objects}
\right\}~.
\]
where the map $R_X\otimes A \to R_{X\otimes A}$ is the unit of the adjunction
$Y\colon \cat M \leftrightarrows \cat V^{\cat M^{\op}} :\! -\otimes \Id_{\cat M}$.
%
%
\[
\cal F_2 = \left\{\left.
\hocolim
\vcenter{
    \xymatrix{
    R_A
    \ar[r]
    \ar[d]  & R_B\\
    R_C
    }
}
 \to R_D \right| \,
 \vcenter{
    \xymatrix{
     A \ar[r]\ar[d]  & B \ar[d]\\
     C \ar[r]          & D
    }
}
\vcenter{
    \xymatrix@=0pt{
    \text{ -- homotopy pushout of}\\
    \text{\phantom{x}\hspace{10pt} cofibrant objects in \cat M}
    }
}
\right\}
\]
\[
\cal F_3 = \left\{\left.
\vcenter{
    \xymatrix{
        \hocolim_{k< \kappa}(R_{A_0}\to \ldots\to R_{A_k}\to R_{A_{k+1}}\to\ldots)\ar[d]\\
        R_{\colim_{k < \kappa} A_k }
    }
}
\right|
\vcenter{
    \xymatrix@=0pt{
        A_0\cofib\ldots\cofib A_k \cofib A_{k+1} \cofib \ldots,\\
        A_k \text{ cofibrant for all } k<\kappa
    }
}
\right\}
\]
Set $\cal F:= \cal F_0 \cup \cal F_1 \cup \cal F_2  \cup \cal F_3$.
\end{definition}

\begin{definition}
  Given a class $\cal F$ of natural transformations of cofibrant-projectively cofibrant
  small functors $\cat M^{\op}\to\cat V$ (for $\cat M$ and $\cat V$ as in Theorem \ref{tcpms}),
  we say that a functor $F:\cat C\to\cat V$ is $\cal F$-\emph{local} if every
  $f:G\to H$ in $\cal F$ induces a weak equivalence
  $f^{\ast}\colon \hom(H,F)\to\hom(G,F)$.
\end{definition}

\begin{proposition}\label{F-local}
Let \cat M be a \cat V-model category, and assume that the underlying category of the category of small functors
$\cat V^{\cat M^{\op}}$ may be equipped with the cofibrant projective model structure.
Let $F\in \cat V^{\cat M^{\op}}$ be a cofibrant-projectively fibrant small functor
taking weighted homotopy colimits of cofibrant objects to homotopy limits in \cat V.
Then the functor $F$ is $\cal F$-local (with respect to the class $\cal F$ of maps
from Definition~\ref{F-def}).
\end{proposition}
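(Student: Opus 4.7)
The plan is to verify $\cal F$-locality of $F$ by checking each of the four subclasses $\cal F_0, \cal F_1, \cal F_2, \cal F_3$ in turn. Three ingredients will recur throughout. First, the enriched Yoneda lemma gives an isomorphism $\hom(R_X, F) \cong F(X)$ in $\cat V$, yielding a weak equivalence of the correct homotopy type whenever $R_X$ is cofibrant-projectively cofibrant (which holds for cofibrant $X$ by the form of the generating set $\cat I$ in Theorem~\ref{tcpms}) and $F$ is cofibrant-projectively fibrant. Second, because $F$ is fibrant, $\hom(-, F)$ takes homotopy colimits of cofibrant-projectively cofibrant diagrams in $\cat V^{\cat M^{\op}}$ to homotopy limits in $\cat V$. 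Third, by hypothesis, $F$ sends weighted homotopy colimits of cofibrant objects in $\cat M$ to homotopy limits in $\cat V$.

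For $\cal F_0$, the empty diagram has homotopy colimit $\emptyset$ in $\cat M$, so the hypothesis forces $F(\emptyset)$ to be weakly equivalent to the terminal object $\ast$; combined with Yoneda ($\hom(R_\emptyset, F) \cong F(\emptyset)$) and the computation $\hom(\emptyset, F) = \ast$, this yields the required equivalence. For $\cal F_1$, Yoneda gives $\hom(R_{X \otimes A}, F) \simeq F(X \otimes A)$ on one side, while the tensor–cotensor adjunction followed by Yoneda gives $\hom(R_X \otimes A, F) \cong \hom_{\cat V}(A, F(X))$ on the other; since $X \otimes A$ is the copower, i.e., the weighted homotopy colimit of $X$ by the weight $A$, the hypothesis identifies $F(X \otimes A)$ with the corresponding weighted homotopy limit $\hom_{\cat V}(A, F(X))$.

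For $\cal F_2$, Yoneda yields $\hom(R_D, F) \simeq F(D)$; the second ingredient rewrites the opposite side as the homotopy pullback of $F(C) \to F(A) \leftarrow F(B)$; and the third, applied to the homotopy pushout $D$ of cofibrant objects, matches the two. The case $\cal F_3$ is analogous: since each $A_k$ is cofibrant and each $A_k \cofib A_{k+1}$ is a cofibration, $\colim_{k < \kappa} A_k$ is the sequential homotopy colimit, so the hypothesis gives $F(\colim_k A_k) \simeq \holim_k F(A_k)$, while the second ingredient together with Yoneda identify this with $\hom(\hocolim_k R_{A_k}, F)$. The one nontrivial conceptual point, to be handled most carefully, is in $\cal F_1$: recognizing the $\cat V$-tensor $X \otimes A$ as a weighted homotopy colimit relies on the Quillen bifunctor structure supplied by $\cat M$ being a $\cat V$-model category, together with the simultaneous cofibrancy of $X$ and $A$. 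The remaining verifications reduce to formal manipulations within the cofibrant-projective model structure established in Theorem~\ref{tcpms}.
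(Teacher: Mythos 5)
Your argument is correct and follows essentially the same route as the paper, whose proof is the one-line observation that the claim "follows from Yoneda's lemma and the fact that the colimit of a sequence of cofibrations of cofibrant objects is a homotopy colimit"; you have simply unpacked that sentence into the four case-by-case verifications. The details you supply (Yoneda plus the exactness hypothesis on $F$ for each of $\cal F_0$--$\cal F_3$, and the identification of $X\otimes A$ as a weighted homotopy colimit for $\cal F_1$) are exactly what the paper leaves implicit.
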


\begin{proof}
This follows from Yoneda's lemma and the fact that the colimit of a sequence of cofibrations of cofibrant objects is a homotopy colimit.
\end{proof}

\begin{remark}
  We have only included in the class $\cal F$ those morphisms which are required for
  the proof of the inverse implication: $\cal F$-local functors are equivalent to
  the representable functors (see Theorem~\ref{Freyd-rep}). In some situations
  the class $\cal F$ of maps may be reduced even further. For example, if \cat V = \sS,
  then the subclass $\cal F_1$ of maps is redundant, since every weighted homotopy colimit
  in a simplicial category can be expressed in terms of the classical homotopy colimits
  (cf. \cite[Lemma~3.1]{Chorny-BrownRep}).

If $\cat V =\cat M =\Sp$ for some closed symmetric monoidal combinatorial model of spectra,
again $\cal F_1$ is not needed, by to Spanier-Whitehead duality
(see \cite[Lemma~7.2]{Duality}).

However, in general weighted homotopy colimits cannot be expressed in terms of
classical homotopy colimits (that is, homotopy colimits with contractible weight),
as is shown by Luk{\'{a}}{\v{s}}-Vok{\v{r}}{\'{\i}}nek in \cite{weighted}.
\end{remark}

\section{Freyd representability theorem, up to homotopy}\label{Freyd}

\begin{theorem}\label{Freyd-rep}
Let \cat V be a closed symmetric monoidal combinatorial model category satisfying
condition \ref{effective}, and suppose that the domains of the
generating cofibrations of \cat V are cofibrant. If \cat M is a \cat V-model category,
then the underlying category of the category $\cat V^{\cat M^{\op}}$ of small \cat V-presheaves on \cat M
may be equipped with the cofibrant-projective model structure. A small functor $F$
is cofibrant-projectively weakly equivalent to a representable functor if and only
if it takes homotopy colimits of cofibrant objects to homotopy limits.
\end{theorem}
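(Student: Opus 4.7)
The existence of the cofibrant-projective model structure on $\bigl(\cat V^{\cat M^{\op}}\bigr)_0$ is precisely the content of Theorem~\ref{tcpms}. For the ``only if'' direction, if $F$ is cofibrant-projectively weakly equivalent to $R_Y = \hom(-,Y)$ for some fibrant $Y$, the two-variable adjunction of the $\cat V$-model category $\cat M$ ensures that $\hom(-,Y)$ sends homotopy colimits of cofibrant objects in $\cat M$ to homotopy limits in $\cat V$; this property is inherited by $F$.

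For the ``if'' direction, the plan is to exploit $\cal F$-locality together with the cellular presentation afforded by Theorem~\ref{tcpms}. After fibrantly replacing $F$ in the cofibrant-projective structure, Proposition~\ref{F-local} implies that $F$ is $\cal F$-local. A cofibrant-projective cofibrant replacement $\tilde F \we F$ is then, by the generalized small object argument, a transfinite composition of pushouts of generating cofibrations $R_{M_\alpha}\otimes i_\alpha$ with $M_\alpha\in \cat M_{\mathrm{cof}}$ and $i_\alpha\colon A_\alpha \cofib B_\alpha$ a generating cofibration of $\cat V$ (with cofibrant domain, by hypothesis).

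Parallel to this presentation I build a candidate representing object $Y=\colim_\alpha Y_\alpha$ in $\cat M$, maintaining inductively an $\cal F$-local equivalence $\tilde F_\alpha \to R_{Y_\alpha}$. At successor stages, the attaching map $R_{M_\alpha}\otimes A_\alpha \to \tilde F_\alpha$, post-composed with the inductive equivalence and the $\cal F_1$-map $R_{M_\alpha}\otimes A_\alpha \to R_{M_\alpha\otimes A_\alpha}$, yields by Yoneda a morphism $M_\alpha \otimes A_\alpha \to Y_\alpha$ in $\cat M$; I form $Y_{\alpha+1}$ as the pushout along $M_\alpha \otimes A_\alpha \cofib M_\alpha \otimes B_\alpha$ (a cofibration by the pushout--product axiom), and the $\cal F_2$-comparison then promotes the inductive equivalence. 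At limit stages $\cal F_3$ supplies the analogous step. The result is a canonical comparison $\tilde F \to R_Y$ that is a transfinite composite of maps in $\cal F$, hence an $\cal F$-local equivalence.

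The main obstacle is upgrading this $\cal F$-local equivalence to a cofibrant-projective weak equivalence. Fibrantly replacing $Y$, the representable $R_Y$ is itself $\cal F$-local by Proposition~\ref{F-local}. In a combinatorial setting one would conclude by invoking the left Bousfield localization of $\cat V^{\cat M^{\op}}$ at $\cal F$ and the standard fact that $\cal F$-local equivalences between $\cal F$-local objects are weak equivalences; since $\cat M$ is not assumed combinatorial, no such ambient Bousfield localization is available. The argument must instead be made pointwise: taking $R_W$ for cofibrant-fibrant $W\in\cat M$ as a projectively cofibrant $\cal F$-local ``probe'', one applies $\hom(R_W,-)$ to the comparison and uses $\cal F$-locality of both $F$ and $R_Y$ to deduce $\tilde F(W) \we \hom(W,Y)$ for every cofibrant $W$. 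This establishes representability of $F$ by $Y$ up to cofibrant-projective weak equivalence.
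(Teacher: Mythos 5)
Your proposal follows the paper's proof essentially step for step: the model structure comes from Theorem~\ref{tcpms}; the representing object is built by transfinite induction along the cellular presentation of a cofibrant replacement of $F$, with $\cal F_0,\cal F_1,\cal F_2,\cal F_3$ invoked at exactly the stages where you invoke them; and the conclusion is drawn from the fact that an $\cal F$-equivalence between $\cal F$-local functors is a weak equivalence, with Proposition~\ref{F-local} supplying locality of (a fibrant replacement of) $F$ and of the representable. Two points of bookkeeping are handled more carefully in the paper. First, the comparison $\tilde F\to R_Y$ is not literally a transfinite composite of maps in $\cal F$; it is produced stage by stage by 2-out-of-3 arguments, where at each successor stage the paper factors $M\otimes A\cofib X'_k\trivfibr X_k$ and uses cofibrancy of $F_k$ to lift against the trivial fibration $R_{X'_k}\trivfibr R_{X_k}$ before forming the pushout and applying $\cal F_2$ (your variant, pushing out along the cofibration $M_\alpha\otimes A_\alpha\cofib M_\alpha\otimes B_\alpha$, also yields a homotopy pushout of cofibrant objects, so this difference is cosmetic). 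Second, and more substantively: Proposition~\ref{F-local} gives $\cal F$-locality only for cofibrant-projectively \emph{fibrant} functors, and $R_Y$ is levelwise fibrant only when $Y$ is fibrant; since $\hom(M,-)$ is right Quillen for cofibrant $M$, applying it to a fibrant replacement $Y\trivcofib\hat Y$ of a non-fibrant $Y$ does not obviously give a levelwise weak equivalence $R_Y\to R_{\hat Y}$, nor, without further argument, an $\cal F$-equivalence. Your plan defers all fibrant replacement to the very end, so you still owe the verification that the comparison survives this replacement (it does, because $\cal F$-local functors send weak equivalences of cofibrant objects to weak equivalences, which one can extract from degenerate squares in $\cal F_2$); the paper instead interleaves fibrant replacements in $\cat M$ at every successor and limit stage, keeping each $X_k$ fibrant and cofibrant throughout so that every $R_{X_k}$ is $\cal F$-local on the nose. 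Your closing ``pointwise probe'' argument is in substance the standard fact that $\cal F$-equivalences between $\cal F$-local objects are weak equivalences, which holds at the level of homotopy function complexes without any ambient Bousfield localization, so your worry on that score is unnecessary.
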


\begin{proof}
  Let $\lambda$ be a regular cardinal such that \cat V is a $\lambda$-combinatorial
  model category. Let $\cal I$ and $\cal J$ be its sets of generating cofibrations and
  trivial cofibrations, respectively. We assume that the domains of the maps in
  $\cal I$ are cofibrant.

  Given a small functor $F\in \cat V^{\cat M^{\op}}$ taking homotopy colimits to
  homotopy limits, let $\widetilde F \trivfibr F$ be a cofibrant replacement for $F$
  in the cofibrant-projective model structure. Then there is a $\lambda$-sequence
  $F_0\to F_1\to \cdots \to F_k\to F_{k+1}\to \cdots \tilde F$ such that
  $F_0(M)=\emptyset$ for all $M\in \cat M$, $\widetilde F=\colim F_k$, and $F_{k+1}$
  is obtained from $F_k$ as a pushout
\[
\xymatrix{
R_{M}\otimes  A
\ar[r]
\ar@{^(->}[d]
                                 & F_k
                                 \ar[d]\\
R_{M}\otimes B
\ar[r]
                                 & F_{k+1},
}
\]
where $M\in \cat M$ is cofibrant, and $(A\cofib B) \in \cal I$, $A$, $B\in \cat V$
are also cofibrant by assumption.

Our proof will proceed by induction. Recall the class $\cal F$ of maps from
Definition~\ref{F-def}, and note that the fibrant replacement of the given functor $F$
is $\cal F$-local by Proposition~\ref{F-local}.

Note also that $F_0=\emptyset$ is $\cal F$-equivalent to
$R_\emptyset=R_{\hocolim_\emptyset \emptyset}$, since the map $\emptyset \to R_\emptyset$
is in $\cal F_0\subset \cal F$.

Suppose by induction that $F_k$ is $\cal F$-equivalent to a representable functor
$R_{X_k}$, where $X_k$ is a fibrant and cofibrant object of \cat M. There is then
a commutative diagram
\[
\xymatrix{
                    &   &   R_{X'_k}
                            \ar@{->>}[dr]^{\dir{~}}\\
R_{M\otimes A}
\ar[ddd]
\ar[rrr]
\ar[urr]
                                 &   &   &   R_{X_k}\\
    & R_{M}\otimes  A
    \ar[r]
    \ar@{^(->}[d]
    \ar[ul]^{\varepsilon}
                                 & F_k
                                 \ar[d]
                                 \ar[ur]
                                 \ar'[u][uu]\\
    & R_{M}\otimes B
    \ar[r]
    \ar[dl]
                                 & F_{k+1}\\
R_{M\otimes B}
}
\]
where the upper horizontal arrow is induced by the universal property of the unit
of the adjunction, and $X'_k$ is a fibrant and cofibrant object of \cat M obtained
as a middle term of the factorization $M\otimes A \cofib X'_k \trivfibr X_k$.

Since the functor $F_k$ is cofibrant, there exists a lift $F_k\to R_{X'_k}$ which is
an $\cal F$-equivalence by the 2-out-of-3 property and does not violate the commutativity
of the above diagram, since the upper slanted arrow $R_{M\otimes A}\to R_{X_k'}$ is also
a natural map induced by the universal property of the unit of adjunction $\varepsilon$.

Let $P:= M\otimes B \coprod_{M\otimes A} X'_k$. We then obtain a commutative diagram
\[
\xymatrix{
R_{M\otimes A}
\ar[ddd]
\ar[rrr]
                                 &   &   &   R_{X_k}'
                                        \ar[ddd]\\
    & R_{M}\otimes  A
    \ar[r]
    \ar@{^(->}[d]
    \ar[ul]
                                 & F_k
                                 \ar[d]
                                 \ar[ur]\\
    & R_{M}\otimes B
    \ar[r]
    \ar[dl]
                                 & F_{k+1}
                                    \ar@{-->}[dr]\\
R_{M\otimes B}
\ar[rrr]
                                &  &  &  R_P
}
\]
in which all the solid slanted arrows are $\cal F$-equivalences and the inner square
is a homotopy pushout. Hence, the homotopy pushout of the outer square
is $\cal F$-equivalent to $F_{k+1}$, and thus the dashed arrow is also an
$\cal F$-equivalence. Finally, let $X_{k+1}$ denote the middle element in the
factorization $X_k' \cofib X_{k+1} \trivfibr \hat P$. Then $R_P\to R_{\hat P}$ is also
an $\cal F$-equivalence, and thus by the 2-out-of-3 property, so  is the lift
$F_{k+1}\to R_{X_{k+1}}$ (which exists since $F_{k+1}$ cofibrant).

If $\kappa$ is a limit ordinal, then $F_{\kappa}=\colim_{k<\kappa} F_k$.
Since this is the colimit of a sequence of cofibrations of cofibrant functors,
$\colim_{k<\kappa} F_k$ is the homotopy colimit $\hocolim_{k<\kappa}F_k$. However, $F_k$ is
$\cal F$-equivalent to $R_{X_k}$. Moreover, by construction there is a sequence of
cofibrations $X_0\cofib\ldots\cofib X_k \cofib X_{k+1} \cofib\ldots$. Hence
$\hocolim R_{X_k}$ is $\cal F$-equivalent to $R_{\colim X_k}$.

If $\kappa$ is not large enough to ensure that  $\colim_{k<\kappa} X_k$ is fibrant,
we can consider the fibrant replacement
$\colim_{k<\kappa} X_k \trivcofib \widehat{\colim_{k<\kappa} X_k}=X_\kappa$.  Combining these
facts together we conclude that $F_\kappa$ is $\cal F$-equivalent to a representable functor
$R_{X_\kappa}$, represented by a fibrant and cofibrant object.

For $\kappa$ large enough we have $F=F_\kappa$, so $F$ is $\cal F$-equivalent to a
functor represented by a fibrant and cofibrant object. But $F$ is an $\cal F$-local
functor by Proposition~\ref{F-local}, and so is $R_{X_\kappa}$ for every $\kappa$.
Hence, $F\simeq R_{X_\kappa}$ for some $\kappa$, since an $\cal F$-equivalence of
$\cal F$-local functors is a weak equivalence.
\end{proof}

The formal category theoretic dual of Theorem \ref{Freyd-rep} is the following:

\begin{theorem}
Let \cat V be a closed symmetric monoidal combinatorial model category satisfying
condition \ref{effective}, and let \cat M be a \cat V-model category such that
the category $\left(\cat V^{\cat M}\right)_0$ may be equipped with the fibrant-projective model structure.
A small functor $F\colon\cat M^{\op}\to\cat V$ is then fibrant-projectively weakly
equivalent to a representable functor if and only if it takes homotopy limits
of fibrant objects to homotopy limits.
\end{theorem}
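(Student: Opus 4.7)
The plan is to dualize the proof of Theorem~\ref{Freyd-rep} step by step, with every construction replaced by its formal dual. Throughout, ``cofibrant'' is interchanged with ``fibrant'', pushouts with pullbacks, sequential colimits with sequential limits, and the representable $R_{M} = \hom(-, M)$ with the covariant representable $R^{M} = \hom(M, -)$.

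Concretely, given a small functor $F$ preserving homotopy limits of fibrant objects in $\cat M$, I will first factor $F \to *$ in the assumed fibrant-projective model structure as $F \we \widetilde{F} \fibr *$. Dually to the cellular cofibrant replacement used in the original argument, $\widetilde{F}$ can be presented as a transfinite limit $\widetilde{F} = \lim_{k<\lambda} F_{k}$ whose base $F_{0}$ is the terminal functor and each of whose successor maps $F_{k+1} \to F_{k}$ is a pullback along $R^{M} \otimes i$, for $M$ fibrant in $\cat M$ and $i \in \cal I$ a generating cofibration of $\cat V$ with cofibrant domain. Next I will introduce the dual class $\cal F^{*}$ of test maps obtained from $\cal F$ of Definition~\ref{F-def} by reversing arrows, exchanging $R_{M}$ for $R^{M}$, and replacing homotopy pushouts and sequential colimits by their duals; the formal dualization of Proposition~\ref{F-local} then identifies $F$ as $\cal F^{*}$-local. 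A transfinite induction will show that every $F_{k}$ is $\cal F^{*}$-equivalent to a representable $R^{X_{k}}$ with $X_{k}$ both fibrant and cofibrant: the base case is handled by the terminal object, and the successor step mirrors the diagram of the original proof after reversing all arrows, using a factorization $M \otimes A \fibr X'_{k} \we X_{k}$ and a homotopy pullback $P$ in place of the homotopy pushout, with the lift produced via 2-out-of-3 and the fibrancy of $F_{k+1}$.

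The main obstacle will be the limit step of the induction. Dually to the observation in the original proof that the colimit of a $\lambda$-sequence of cofibrations between cofibrant objects is already a homotopy colimit, I will need that the cofiltered limit of a tower of fibrations between fibrant objects is already a homotopy limit; this, together with enriched Yoneda, will yield an $\cal F^{*}$-equivalence $\lim_{k<\kappa} R^{X_{k}} \simeq R^{\lim_{k<\kappa} X_{k}}$, after replacing $\lim_{k<\kappa} X_{k}$ by a fibrant-cofibrant model $X_{\kappa}$. Once the induction is complete, for $\kappa$ large enough $F = F_{\kappa}$ is $\cal F^{*}$-equivalent to a representable $R^{X_{\kappa}}$, and since both sides are $\cal F^{*}$-local this is a fibrant-projective weak equivalence. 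The converse, that every representable preserves homotopy limits of fibrant objects, will follow from the enriched Yoneda lemma, just as in the original theorem.
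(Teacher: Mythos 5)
The paper gives no written proof of this statement: it is the \emph{formal} dual of Theorem~\ref{Freyd-rep}, obtained by applying that theorem to the $\cat V$-model category $\cat M^{\op}$. Under this translation small covariant functors $\cat M\to\cat V$ are exactly small presheaves on $\cat M^{\op}$, the fibrant-projective structure on $\cat V^{\cat M}$ is the cofibrant-projective structure on $\cat V^{(\cat M^{\op})^{\op}}$, and homotopy limits of fibrant objects in $\cat M$ are homotopy colimits of cofibrant objects in $\cat M^{\op}$; nothing new needs to be proved. Your plan to re-run the induction is workable in principle, but you have dualized too much: only the variable category $\cat M$ is replaced by its opposite, while the target $\cat V$ and hence the functor category are \emph{not} dualized. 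The model structure on $\cat V^{\cat M}$ is still of projective, cofibrantly generated type, with generating cofibrations $R^M\otimes i$ for $M$ \emph{fibrant} in $\cat M$, and with fibrations and weak equivalences detected levelwise on fibrant objects. Consequently the object to be resolved is still a \emph{cofibrant} replacement $\widetilde F\trivfibr F$, presented as a transfinite \emph{colimit} of \emph{pushouts} along the maps $R^M\otimes i$, exactly as in the original proof. Your proposed presentation of a fibrant replacement as a transfinite limit of pullbacks along $R^M\otimes i$ does not exist: pulling back along a generating \emph{cofibration} cannot produce a tower of fibrations, and a cofibrantly generated model category has no generating fibrations to pull back along. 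The same over-dualization breaks the later steps: the lift $F_k\to R^{X_k'}$ is produced from the \emph{cofibrancy} of $F_k$ against the levelwise trivial fibration $R^{X_k'}\to R^{X_k}$ (induced via the enrichment by a trivial cofibration $X_k\trivcofib X_k'$ in $\cat M$), not from fibrancy of $F_{k+1}$; and at limit ordinals the relevant comparison map is $\hocolim_{k<\kappa} R^{X_k}\to R^{\lim_{k<\kappa} X_k}$, the dual of a map in $\cal F_3$, not an equivalence $\lim_{k<\kappa} R^{X_k}\simeq R^{\lim_{k<\kappa} X_k}$ --- a limit of covariant representables would represent a \emph{colimit} of the representing objects, which is a different object altogether.

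What does get dualized is the $\cat M$-side only: fibrant objects replace cofibrant ones; the cotensor $M^A$ replaces the tensor $M\otimes A$, so the attaching map is adjoint to a map $X_k\to M^A$ and is factored as $X_k\trivcofib X_k'\fibr M^A$ (your ``$M\otimes A\fibr X_k'\we X_k$'' has both the object and the direction wrong); homotopy pullbacks of fibrant objects replace homotopy pushouts of cofibrant ones; and towers of fibrations between fibrant objects, whose inverse limits are homotopy limits, replace sequences of cofibrations between cofibrant objects --- this last point is the one place where your dual observation is the correct input. With these corrections the induction of Theorem~\ref{Freyd-rep} goes through verbatim; as written, your argument does not.
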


\section{Brown representability theorem, up to homotopy}\label{Brown}

In this section we prove a homotopy version of the Brown Representability Theorem
for contravariant functors from a locally presentable \cat V-model
category \cat M to \cat V taking homotopy colimits to homotopy limits.

Note that our proof does not use explicitly the presence of compact objects, as most
known proofs in this field do. Rather we show directly that a contravariant homotopy
functor  from \cat M to \cat V is cofibrant projectively weakly equivalent to a small
functor, and then use the Freyd representability theorem, up to homotopy.

\begin{lemma}\label{small}
  Let \cat M be a combinatorial \cat V-category. Then any functor
  $H\colon \cat M^{\op}\to \cat V$ taking homotopy colimits of cofibrant objects to homotopy
  limits is cofibrant-projectively weakly equivalent  to a small homotopy functor
  $F\in \cat V^{\cat M^{\op}}$.
\end{lemma}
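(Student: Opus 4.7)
My plan is to approximate $H$ from within by a functor obtained by restricting $H$ to a small, ``dense enough'' subcategory of cofibrant objects and then extending it homotopically. Since $\cat M$ is combinatorial, fix a regular cardinal $\lambda$ large enough that both $\cat M$ and $\cat V$ are $\lambda$-combinatorial, all generating (trivial) cofibrations of $\cat M$ and of $\cat V$ have $\lambda$-presentable sources and targets, and $\lambda$-filtered colimits of cofibrant objects along cofibrations in $\cat M$ are homotopy colimits. Let $\cat D$ denote a small skeleton of the full subcategory of cofibrant $\lambda$-presentable objects of $\cat M$. By the choice of $\lambda$, every cofibrant $M\in \cat M$ can be written as $M=\colim_\alpha C_\alpha$, a $\lambda$-filtered colimit of cofibrations among objects $C_\alpha\in \cat D$, and this colimit is then also a homotopy colimit.

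The candidate $F$ is constructed as a (homotopy) left Kan extension of $H|_{\cat D^{\op}}$ along the inclusion $i\colon \cat D\hookrightarrow \cat M$. Concretely, pick a cofibrant replacement $\wt{H}$ of $H|_{\cat D^{\op}}$ in the projective model structure on $\cat V^{\cat D^{\op}}$ (available since $\cat D$ is small), and set $F:=\Lan_{i^{\op}} \wt{H}$. Then $F$ is a small functor (being a left Kan extension from a small subcategory), is cofibrant-projectively cofibrant, and is a homotopy functor because $\wt{H}$ is a projective cell complex and Kan extensions of such along inclusions of small subcategories are built out of copowers of representables. The counit of the Kan-extension adjunction provides a natural transformation $\epsilon\colon F\to H$ whose components at objects of $\cat D$ are weak equivalences by construction of $\wt{H}$, and whose components at an arbitrary cofibrant $M\in \cat M$ are what we need to verify.

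Fix cofibrant $M\in \cat M$ with $M\simeq\hocolim_\alpha C_\alpha$. On the target side, $H(M)\simeq\holim_\alpha H(C_\alpha)$ by the hypothesis that $H$ converts homotopy colimits of cofibrant objects to homotopy limits. The matching identification $F(M)\simeq\holim_\alpha F(C_\alpha)$, combined with $F(C_\alpha)\simeq H(C_\alpha)$ on $\cat D$, would then give $F(M)\simeq H(M)$ via $\epsilon_M$. Establishing this matching is the principal obstacle: the Kan-extension formula presents $F(M)$ as a coend, i.e., a colimit, rather than a homotopy limit, so we must show that the derived left Kan extension $F$ inherits from its restriction $\wt{H}$ the property of sending homotopy colimits of cofibrant objects in $\cat D$ --- in particular the $\lambda$-filtered one presenting $M$ --- to homotopy limits in $\cat V$. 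This inheritance is precisely what the largeness of $\lambda$, combined with the density of $\cat D$ in the cofibrant part of $\cat M$ and the projective cofibrancy of $\wt{H}$, is arranged to provide, and verifying it is the only non-formal step in the argument.
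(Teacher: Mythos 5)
Your construction goes in the wrong direction, and the step you defer --- that the derived left Kan extension $F=\Lan_{i^{\op}}\wt H$ again takes homotopy colimits of cofibrant objects to homotopy limits, so that $F(M)\simeq\holim_\alpha F(C_\alpha)$ --- is not merely ``non-formal'': it is false in general, and with it the claim that $\epsilon_M$ is a weak equivalence. The left Kan extension along $i^{\op}\colon\cat D^{\op}\to\cat M^{\op}$ is the weighted colimit of representables $F=\int^{D\in\cat D}R_D\otimes\wt H(D)$, so $F(M)=\int^{D}\hom_{\cat M}(M,D)\otimes\wt H(D)$ is a colimit indexed by maps \emph{out of} $M$ into $\lambda$-presentable objects; nothing makes this agree with $H(M)$. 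Concretely, take $\cat V=\cat M=\sS$, $\lambda=\omega$, $H=\hom(-,Y)$ with $Y=\NN$ an infinite discrete simplicial set, and $M=Y$. Every point of $F(M)$ is represented by a pair $(M\to D,\ D\to Y)$ with $D$ finite, so $\epsilon_M$ lands in the self-maps of $\NN$ with finite image and misses the identity; since everything in sight is discrete, $\epsilon_M$ fails to be a weak equivalence even though $H=R_Y$ is as exact (and as representable) as can be. Replacing $i^{*}H$ by a projectively cofibrant $\wt H$ does not help: $\pi_0$ of the homotopy coend is still the ordinary coend of discrete sets.

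The paper's proof uses the \emph{right} Kan extension $F=\Ran_{i}i^{*}H=\{i^{*}H,i^{*}Y\}$ instead. This is the correct comparison object: every cofibrant $M$ is a canonical $\lambda$-filtered homotopy colimit of $\lambda$-presentable cofibrant objects, both $H$ and $\Ran_{i}i^{*}H$ carry this to a homotopy limit (the right Kan extension is a limit-type formula, so this is automatic for it), and the two functors agree on $\cat M_\lambda$; hence $H\to F$ is a cofibrant-projective weak equivalence --- exactly the argument you wanted, but now actually available. The price is that smallness of $F$ is no longer automatic, and this is where the theorem of Day and Lack enters: the category of small presheaves on $\cat M$ is complete, so a weighted limit of representables is again small. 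To repair your argument, replace $\Lan$ by $\Ran$ and supply the Day--Lack citation for smallness.
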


\begin{proof}
  Suppose \cat M is a $\lambda$-combinatorial model category, for some cardinal $\lambda$
  such that the weak equivalences are a $\lambda$-accessible subcategory of the
  category of maps of \cat M.

  Consider a functor $H\colon \cat M^{\op} \to \cat V$ taking homotopy colimits of
  cofibrant objects to homotopy limits, with the natural map
\[
H\to F=\Ran_{i\colon \cat M_\lambda \cofib \cat M} i^*H~,
\]
%
where $\Ran$ is the right Kan extension along the inclusion of categories
$i\colon \cat M_\lambda \cofib \cat M$. Here $\cat M_\lambda$ denotes the full subcategory of
$\lambda$-presentable objects in $\cat M$.

This map is a weak equivalence in the cofibrant-projective model category, because
both functors take homotopy colimits to homotopy limits and every cofibrant object
of \cat M is a ($\lambda$-filtered) homotopy colimit of $\lambda$-presentable cofibrant
objects, \cite[Cor.~5.1]{fat}, on which the two functors coincide.

But we can interpret the right Kan extension as a weighted inverse limit of
representable functors
\[
F=\Ran_{i\colon \cat M_\lambda \cofib \cat M} i^*H = \int_{M\in \cat M_\lambda} \hom(H(M),M) = \{ i^*H, i^* Y\},
\]
where $Y\colon \cat M \to \cat V^{\cat M^{\op}}$ is the Yoneda embedding and
$\{ i^*H, i^* Y\}$ is the weighted inverse limit of $i^* Y$ indexed by $i^* H$
(see \cite[3.1]{Kelly}).

Then by the theorem of Day and Lack in \cite{Day-Lack}, $F$ is small as an inverse limit
of small functors.

\end{proof}

\begin{remark}
  When we speak about sufficiently large filtered colimits in a combinatorial
  model category, they turn out to be homotopy colimits, no matter if the objects
  participating in them are cofibrant or not. Therefore the above Lemma admits the following
  concise formulation: any functor taking homotopy colimits to homotopy limits is
  levelwise weakly equivalent to a small functor.
\end{remark}

\begin{theorem}\label{Brown-rep}
  Let \cat V be a closed symmetric monoidal model category satisfying condition
  \ref{effective}, and \cat M a combinatorial \cat V-model category. Any functor
  $H\colon \cat M^{\op}\to \cat V$ taking homotopy colimits of cofibrant objects to
  homotopy limits is then cofibrant-projectively weakly equivalent to a representable functor.
\end{theorem}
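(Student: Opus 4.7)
The plan is a direct two-step reduction, combining Lemma~\ref{small} with the homotopy Freyd representability theorem (Theorem~\ref{Freyd-rep}). First I would apply Lemma~\ref{small} to the given functor $H$: its hypotheses are exactly those of the theorem at hand, namely that $\cat M$ is combinatorial and $H$ sends homotopy colimits of cofibrant objects to homotopy limits. This produces a small functor $F\in \cat V^{\cat M^{\op}}$ together with a cofibrant-projective weak equivalence $H\to F$. Since $F$ is built as a right Kan extension $\Ran_{i}i^{*}H$ along the inclusion of $\lambda$-presentable objects, and since the two functors agree up to weak equivalence on all cofibrant objects, $F$ inherits the exactness property of $H$: it too sends homotopy colimits of cofibrant objects to homotopy limits.

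Next I would apply Theorem~\ref{Freyd-rep} to $F$. The Freyd theorem applies because $\cat V$ satisfies Condition~\ref{effective}, $\cat M$ is in particular a $\cat V$-model category, and $F$ is a small functor satisfying the required homotopy-exactness condition. The conclusion is that there is a fibrant-and-cofibrant object $X\in\cat M$ and a cofibrant-projective weak equivalence $F\simeq R_{X}$. Composing, one obtains $H\to F\simeq R_{X}$, which is the desired cofibrant-projective weak equivalence to a representable functor.

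The main obstacle I expect is confirming that the small replacement $F$ produced by Lemma~\ref{small} genuinely still satisfies the hypothesis of Theorem~\ref{Freyd-rep}, rather than merely inheriting weak equivalence to $H$. Concretely, one must check that $\cal F$-locality in the sense of Proposition~\ref{F-local} is preserved under cofibrant-projective weak equivalence of cofibrant-projectively fibrant functors; but this is transparent from Definition~\ref{F-def}, since each of the classes $\cal F_{0},\cal F_{1},\cal F_{2},\cal F_{3}$ is built entirely from representables at cofibrant objects of $\cat M$ and cofibrant objects of $\cat V$. A secondary point to track is the standing hypothesis of Theorem~\ref{Freyd-rep} that the domains of the generating cofibrations of $\cat V$ be cofibrant; I would carry this as a silent assumption inherited from the Freyd statement, since Condition~\ref{effective} together with combinatoriality of $\cat M$ is what Theorem~\ref{Brown-rep} advertises, and the remaining technical hypothesis on $\cat V$ is needed only to invoke the Freyd step.
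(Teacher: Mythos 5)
Your proposal is correct and follows exactly the route of the paper, whose entire proof of Theorem~\ref{Brown-rep} is the one-line reduction ``Follows from Theorem~\ref{Freyd-rep} by Lemma~\ref{small}.'' Your additional care in checking that the small replacement $F$ still satisfies the exactness hypothesis, and your observation that the combinatoriality of $\cat V$ and the cofibrancy of the domains of its generating cofibrations must be carried over silently from the Freyd statement, are both sound and in fact more explicit than what the paper records.
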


\begin{proof}
Follows from Theorem~\ref{Freyd-rep} by Lemma~\ref{small}.
\end{proof}

\section{Counter-example}\label{counter-example}

We have proved so far two representability theorems, up to homotopy. The first is of
Freyd type, i.e., some set theoretical conditions are required to be satisfied by the
functor in question. The second one is of Brown type, i.e., some set theoretical
assumptions apply to the domain category. But what happens if we make no set theoretical
assumptions on either the domain category or the functor? Are exactness conditions
enough to ensure representability up to homotopy?

Mac Lane's classical (folklore) example of a functor $B\colon \Grp \to \Set$,
which assigns to each group $G$ the set of all homomorphisms from the free product
of a large collection of non-isomorphic simple groups to $G$, is an example of a
(strictly) non representable functor. Notice that neither does $B$ satisfy the solution
set condition, nor is the category $\Grp^{\op}$ locally presentable. Perhaps
representability up to homotopy is less demanding and would persist without any conditions?

Our example is similar in nature to Mac Lane's example, but it has also another
predecessor: in \cite{DF}, Dror-Farjoun gave an example of a failure of Brown representability
for generalized Bredon cohomology.

Consider the closed symmetric combinatorial model category of
spaces $\cal S$, with $\cat M = \cal S^{\cal S}$ the category of
small functors. Let $B\colon \cal S\to \cal S$ be a functor which
is not small, such as $B=\hom(\hom(-,S^0), S^0)$. Note that
$B\notin \cat M$, since $B$ is not accessible and all small
functors are. However,  $H=\hom(-,B)\colon \cat M\to \cal S$ is
well defined, since for any small functor $F\in \cat M$, there
exists a small subcategory $i\colon \cat A\hookrightarrow \cal S$
such that $F=\Lan_{i}i^*F$ and hence $H(F) = \hom(F,B) =
\hom_{\cal S^{\cat A}}(i^*F,i^*B)\in \cal S$.

We have defined a functor $H\colon \cat M^{\op}\to \sS$ taking homotopy colimits
of cofibrant objects to homotopy limits, but it is not representable, even up to homotopy:
otherwise, there would exist a small fibrant functor $A\in \cat M$ such that
$H(-)\simeq \hom(-, A)$. By J.~H.~C.~Whitehead's argument we know $A\simeq B$, but
then $B$ would preserve the $\lambda$-filtered homotopy colimits for some $\lambda$.
This is a contradiction, since $B$ does not preserve filtered colimits even of
discrete spaces.

\section{Mapping space recognition principle}\label{recognition}

In this section we assume that the closed symmetric monoidal combinatorial model
category $\cat V$ is the category $\Top_*$ of $\Delta$-generated topological spaces.
This is a locally presentable version of the category of topological spaces, first
proposed by Jeff Smith and described in detail by Fajstrup and Rosicky
in \cite{Fajstrup-Rosicky}.

Let $X\in \Top_*$ be a path-connected space, and let $A\in \Top_*$ be a CW-complex.
We will describe a sufficient condition for there to exist a space $Y\in \Top_*$ such that
$X\simeq \hom(A,Y)$. For example, if $A=S^1$, the required condition is that $X$
can be equipped with an algebra structure over the little intervals operad.
In practice that means that the space $X$ admits $k$-ary operations, i.e.,
maps $X\times \ldots \times X \to X$ satisfying a long list of higher
associativity conditions.

For a space $A$ more general then $S^1$ it is insufficient to consider the structure
given by the maps $X\times \ldots \times X \to X$, by \cite{Badz-Dobr-note}.
We will consider, instead, the structure given by the mutual interrelations of
all possible homotopy inverse limits of $X$ \ -- \ the structure we would have
if $X$ indeed was equivalent to $\hom(A, Y)$ for some $Y$. Indeed, consider
the subcategory of $A$-cellular spaces $C(A)\subset \Top_*$. This is the minimal
subcategory containing $A$ and closed under the homotopy colimits. Any homotopy
colimit of a diagram involving $A$ is then taken by the functor $\hom(-, Y)$ into the
homotopy limit of an opposite diagram involving $X$. In other words, every mapping
space $X$ is equipped with a functor $F_X\colon C(A)\to \Top_\ast$. Moreover,
this functor has a very nice property: it takes homotopy colimits into homotopy limits.

Our goal is to show the converse statement: if for a given $X\in \Top_\ast$ there
exists a functor $F\colon C(A)^\op \to \Top_\ast$ taking homotopy colimits to homotopy
limits and satisfying $F(A)\simeq X$, this $F$ is weakly equivalent to a
representable functor, i.e., there is a space $Y$ such that $F(-)\simeq \hom(-, Y)$.

\begin{theorem}
  For any cofibrant $A\in \Top_*$ and any $X\in \Top_{\ast}$, there is an object
  $Y\in \Top_*$ satisfying $X\simeq \hom(A, Y)$ if and only if there exists a
  functor $F\colon C(A)^{\op}\to \Top_{\ast}$ taking homotopy colimits to homotopy
  limits and satisfying $F(A)\simeq X$.
\end{theorem}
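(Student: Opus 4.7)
The forward implication is immediate: given $Y\in \Top_*$ with $X\simeq \hom(A,Y)$, set $F:=\hom(-,Y)|_{C(A)^{\op}}$; the closed monoidal adjunction ensures that $F$ takes homotopy colimits to homotopy limits, and $F(A)\simeq X$ by assumption. The content of the theorem therefore lies in the reverse direction, and my plan is to extend the given $F$ to a functor on all of $\Top_*^{\op}$ which still sends homotopy colimits to homotopy limits, and then to invoke the Brown representability theorem (Theorem~\ref{Brown-rep}).

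To produce a combinatorial model category in which the cofibrant objects agree up to weak equivalence with $C(A)$, I would equip $\Top_*$ with the right Bousfield localization at the object $A$, obtaining a combinatorial $\Top_*$-model category $\cat M_A$ whose weak equivalences are the $A$-equivalences, whose fibrations are the underlying fibrations, in which $A$ is cofibrant, and whose cofibrant objects are (up to weak equivalence) precisely the $A$-cellular spaces, i.e., the objects of $C(A)$. I would then define $\tilde F\colon \Top_*^{\op}\to \Top_*$ by $\tilde F(Z):=F(QZ)$, where $Q$ is a cofibrant replacement functor in $\cat M_A$; since $QZ\in C(A)$, this is well defined, and $\tilde F(A)\simeq F(A)\simeq X$ because $A$ is itself cofibrant. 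To verify that $\tilde F$ takes homotopy colimits of cofibrant objects in $\cat M_A$ to homotopy limits, I would note that any such hocolim $\hocolim Z_i$ lies in $C(A)$ (by the very definition of $C(A)$), so that Whitehead's theorem for $A$-cellular spaces identifies the $A$-equivalence $Q(\hocolim Z_i)\to \hocolim Z_i$ with an ordinary weak equivalence; the hypothesis on $F$ then supplies $\tilde F(\hocolim Z_i)\simeq F(\hocolim Z_i)\simeq \holim F(Z_i)\simeq \holim \tilde F(Z_i)$. Applying Theorem~\ref{Brown-rep} to $\tilde F$ in $\cat M_A$ produces a fibrant $Y\in \Top_*$ and a cofibrant-projective weak equivalence $\tilde F\simeq \hom(-,Y)$; specializing to the cofibrant object $A$ yields $X\simeq F(A)\simeq \hom(A,Y)$, as desired.

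The principal obstacle will be confirming the required properties of the right Bousfield localization $\cat M_A$: namely, that localizing the combinatorial, left proper $\Top_*$-model category $\Top_*$ at $\{A\}$ yields a combinatorial $\Top_*$-model category whose cofibrant objects coincide, up to weak equivalence, with $C(A)$. Existence should be extracted from Hirschhorn's theory of colocalizations together with its combinatorial refinements, and the identification of cofibrant objects with $C(A)$ from the classical theory of $A$-cellularization due to Dror Farjoun. One should also verify that the $\Top_*$-enrichment survives the localization, so that Theorem~\ref{Brown-rep} genuinely applies with $\cat V=\Top_*$.
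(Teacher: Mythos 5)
Your proposal is correct and follows essentially the same route as the paper: both reduce the statement to Theorem~\ref{Brown-rep} by passing to the right Bousfield localization $\Top_*^A$ (combinatorial because one works with $\Delta$-generated spaces), whose cofibrant objects are the $A$-cellular spaces $C(A)$. The only difference is the device used to extend $F$ to all of $\Top_*^A$: the paper takes the left Kan extension $H=\Lan_i F$ along the inclusion $i\colon C(A)\to \Top_*^A$ (so that $H(A)=F(A)$ on the nose), whereas you precompose with a functorial cofibrant replacement $Q$; the two extensions agree up to weak equivalence on cofibrant objects, and your explicit verification that $F\circ Q$ still sends homotopy colimits of cofibrant objects to homotopy limits is a reasonable substitute for the paper's (unargued) assertion that $\Lan_i F$ satisfies the hypotheses of Theorem~\ref{Brown-rep}.
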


\begin{proof}
Necessity of the condition is clear. We will prove the sufficiency now.

Consider the right Bousfield localization of $\Top_*$ with respect to $A$.
By \cite[5.1.1(3)]{Hirschhorn} we obtain a cofibrantly generated model structure,
which is also combinatorial, since we have chosen to work with a locally presentable
model of topological spaces. We denote the new model category by $\Top_*^A$.
The subcategory of cofibrant objects of $\Top_*^A$ is then $C(A)$ as above.
We denote the inclusion functor by $i\colon C(A)\to \Top_*^A$.

Given a functor $F\colon C(A)\to \Top_*$ taking homotopy colimits to homotopy
limits and satisfying $F(A)=X$, consider the left Kan extension  $H = \Lan_i F$ of $F$
along the inclusion $i$. This functor $H\colon \Top_*^A\to \Top_*$ then satisfies the
condition of Theorem~\ref{Brown-rep}, hence there exists $Y\in \Top_*^A$ such that
$H(-) \simeq \hom(-, Y)$, in particular, $H(A)=F(A)\simeq X \simeq \hom(A,Y).$
\end{proof}

\bibliographystyle{abbrv}
\bibliography{Xbib}

\end{document}